\documentclass[12pt,a4paper]{amsart}
\usepackage{amsfonts}
\usepackage{amsmath}
\usepackage{amssymb}
\usepackage{amsthm}
\usepackage{graphicx}
\usepackage{hyperref}
\usepackage[T1]{fontenc}

\newtheorem{theorem}{Theorem}

\newtheorem{observation}[theorem]{Observation}
\newtheorem{corollary}[theorem]{Corollary}
\newtheorem{problem}[theorem]{Problem}

\begin{document}
	\title{On-line majority edge-colourings of graphs}
	\thanks{Research partially supported by the AGH University of Krakow under grant no. 16.16.420.054 and Excellence Initiative -- Research University AGH University of Krakow, funded by the Polish Ministry of Science and Higher Education.}
	
	\author{Pawe{\l} P\k{e}ka{\l}a}
	\address[agh]{AGH University of Krakow, al. A. Mickiewicza 30, 30-059 Krakow, Poland}
	\email{\tt ppekala@agh.edu.pl}
	
	\begin{abstract}
		A \emph{majority edge-colouring} of a graph $G$ is a colouring of the edges of $G$ such that, for every vertex $v$ of $G$, at most half of the edges incident with $v$ receive the same colour. This notion was introduced by Bock \textit{et al.} in 2023, who proved that every graph of minimum degree at least $2$ has a majority $4$-edge-colouring.
		
		We investigate an \emph{on-line} variant of majority edge-colouring in which the graph is revealed by the Presenter edge-by-edge and the Algorithm must colour each edge immediately and irrevocably. In particular. we prove that the greedy strategy, which uses at most five colours, is optimal among on-line algorithms if $\delta = O(\frac{\log n}{\log\log n})$. We further extend our results to $1/k$-majority edge-colourings.
	\end{abstract}
	
	\maketitle
	
	\section{Introduction}
	
	Majority colourings are a relaxation of proper colourings. Whereas a proper colouring requires that all neighbours (vertices or edges) of a given vertex receive distinct colours, a majority colouring only requires that no colour is present on majority of those neighbours. In recent years many variants of this problem received considerable attention.
	
	The notion of majority colourings was first considered in the context of digraphs by Kreutzer, Oum, Seymour, van der Zypen and Wood~\cite{digraph1}. In this setting, a majority colouring is a colouring of the vertices of a digraph $D$ such that for every vertex $v$ of $D$ at most half of the out-neighbours of $v$ receive the same colour as $v$. They proved that four colours always suffice and conjectured that three colours are enough. See also \cite{Szabo-majority,digraph2,MajorityGeneralOur,digraph3,Knox-Samal} for related results and generalisations.
	
	Note that the optimal result for the corresponding variant in the graph setting follows from the Lov\'asz's decomposition theorem~\cite{Lovasz}, which implies that every finite graph admits a majority colouring with only two colours. A stronger variant in this setting, which requires that \textit{no} colour constitutes a majority in the neighbourhood of any vertex, was recently introduced by Kalinowski, Kamyczura, Pil\'sniak and Wo\'zniak~\cite{strong_kkpw}.
	
	The extension of majority colourings to edges was introduced by Bock, Kalinowski, Pardey, Pilśniak, Rautenbach and Woźniak~\cite{Bock}. A \textit{majority edge-colouring} of a graph $G$ is an edge-colouring such that, for every vertex $v$ of $G$, at most half the edges incident with $v$ share the same colour. Note that such a colouring cannot exist if a graph contains a vertex of degree $1$. It was shown in \cite{Bock} that four colours are enough for any graph without vertices of degree $1$, and that the number of colours can be reduced to three if $\delta (G) \geq 4$.
	
	\begin{theorem}[\cite{Bock}]
		Every graph of minimum degree at least $4$ has a majority $3$-edge-colouring
	\end{theorem}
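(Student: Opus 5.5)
The plan is to build the colouring from Euler tours, first producing an almost balanced $2$-edge-colouring and then repairing the few vertices where one colour is slightly over half by recolouring some edges with the third colour. The condition $\delta(G)\ge 4$ gives the slack for this repair.

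\emph{Step 1: reduction to even degrees.} We may assume $G$ is connected. Let $O$ be the set of odd-degree vertices. Add a new vertex $z$ joined to every vertex of $O$, obtaining $G^+$, in which every vertex has even degree. Take an Euler tour of each component of $G^+$. When a component contains $z$, start the tour at $z$. Otherwise start it at an arbitrary vertex $s$.

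\emph{Step 2: the $2$-edge-colouring.} Colour the edges of each tour alternately red and blue. Each time the tour passes through a vertex $v$ other than its starting point, it uses two consecutive edges of different colours. Hence, in $G^+$, every vertex other than the starting point sees exactly $d_{G^+}(v)/2$ edges of each colour. Only the starting point may have an imbalance of $2$, and only when its tour has odd length.

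Restricting to $G$ gives the following.
\begin{itemize}
\item Every even-degree vertex of $G$ other than $s$ is perfectly balanced, so both colours have at most $d(v)/2$ edges.
\item Every $v\in O$ loses exactly one edge $vz$. It then has $(d(v)+1)/2$ edges in one colour, which violates the majority condition by exactly one edge.
\item The vertex $s$, if its tour has odd length, has $d(s)/2+1$ edges of one colour, so it is over by one.
\end{itemize}
Call these vertices \emph{defective}, and for each defective $v$ call the over-represented colour its \emph{heavy} colour.

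\emph{Step 3: repair with green.} For each defective vertex $v$, recolour one heavy-coloured edge at $v$ green. This fixes $v$, since its heavy colour drops to at most $\lfloor d(v)/2\rfloor$ and green is small. The danger is the other endpoint $w$ of a recoloured edge. At $w$, green rises and one of red or blue drops, so the only way to break $w$ is that green itself exceeds $d(w)/2$. Since $d(w)\ge 4$, green is harmless as long as each vertex has green degree at most $2$.

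So we need a set $F$ of edges with these properties:
\begin{itemize}
\item $F$ contains at least one heavy-coloured edge at every defective vertex;
\item every vertex has at most $2$ edges of $F$;
\item recolouring does not make a previously balanced vertex lose balance.
\end{itemize}
The last condition is automatic, because removing a red or blue edge at a balanced vertex only lowers that colour.

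To find $F$ with bounded degrees, exploit the structure of the tour. For $v\in O$, take the tour edge immediately following or preceding the visit through $z$. Each visit of the tour to $z$ yields two such edges, one at each neighbouring vertex of $O$ on the tour. Each such edge lies at a defective vertex and can be chosen of the heavy colour by picking the correct side. One then checks that every vertex is an endpoint of at most two chosen edges. This is where I would exploit the choice of starting the tour at $z$, and possibly rerouting the tour so that consecutive visits to $z$ are separated.

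The main obstacle is exactly this bounded-degree selection, together with the single exceptional start vertex $s$, whose fix must not collide with the others. If the direct tour argument gets messy, I would fall back on a potential argument. Among all $3$-edge-colourings that extend the red/blue structure, choose one minimising $\sum_v\sum_c \binom{d_c(v)}{2}$. Then show by a Lov\'asz-type recolouring argument that a remaining violation at $v$ can be pushed to an edge $vw$ whose recolouring strictly decreases the potential, using $d(v),d(w)\ge4$ to absorb the $+2$ error terms.
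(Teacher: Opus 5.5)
The paper does not prove this theorem; it quotes it from Bock et al. So your proposal has to stand on its own, and as written it has a genuine gap. Steps 1--2 and the reduction to finding $F$ are sound. But the existence of $F$ is the whole content of the theorem, and you leave it unproved.

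\textbf{The proposed selection fails.} The edge you select for $v\in O$ (the tour edge next to $vz$) is indeed heavy at $v$. Nothing, however, bounds the number of selected edges at a vertex: a vertex $x$ receives one selected edge for every visit to $z$ at tour-distance two from $x$. For example, let $v$ have degree $4$ with neighbours $u_1,\dots,u_4\in O$, and let the tour contain the segment $z\,u_1\,v\,u_2\,z\,u_3\,v\,u_4\,z$. Then all four edges at $v$ are selected, and $v$ becomes entirely green. Separating consecutive visits to $z$ does not address this. So ``one then checks that every vertex is an endpoint of at most two chosen edges'' is false for an arbitrary tour, and you give no rule that produces a good tour. (The vertex $s$ is not an obstacle: in a connected $G$ it is used only when $O=\emptyset$, and then it is the only defective vertex.)

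\textbf{The fallback fails as stated.} Recolouring an $a$-edge $vw$ to $b$ changes $\sum_v\sum_c\binom{d_c(v)}{2}$ by $(d_b(v)-d_a(v)+1)+(d_b(w)-d_a(w)+1)$. Take $v$ of degree $5$ with colour counts $(3,1,1)$: the first bracket is only $-1$. If every $a$-neighbour $w$ has counts $(d_a,d_b,d_c)=(1,1,2)$, a legal degree-$4$ vertex, the second bracket is at least $1$ for both choices of $b$. So no recolouring at $v$ decreases the potential.

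\textbf{One way to close the gap.} Reverse the order and drop the heavy-colour requirement.
\begin{enumerate}
\item Assign each $v\in O$ to a neighbour $w$, where $w$ may be chosen at most $\lfloor d(w)/2\rfloor-1$ times if $w\in O$ and at most $\lfloor d(w)/2\rfloor$ times otherwise. Since $\delta\ge4$, these capacities are at least $\max\{1,(d(w)-3)/2\}$. Double counting gives $\sum_{w\in N(S)}d(w)\ge5|S|$ for every $S\subseteq O$. Hence the total capacity of $N(S)$ is at least $\max\{|N(S)|,(5|S|-3|N(S)|)/2\}\ge|S|$, which is Hall's condition.
\item The chosen edges form a green set $F$ with $d_F(v)\ge1$ for $v\in O$ and $d_F\le\lfloor d/2\rfloor$ everywhere.
\item Now $2$-colour $G-F$ by your Euler-tour method. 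A parity check shows that red and blue each stay within $\lfloor d/2\rfloor$; the key point is that an odd vertex with $d_F$ even has $d_F\ge2$.
\item An all-even component of $G-F$ with an odd number of edges needs separate handling. Start its tour at a vertex with $d_F\ge2$ (resp.\ $d_F\ge3$ if the degree is odd). If no such vertex exists, move any one of its edges into $F$.
\end{enumerate}
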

	
	A colouring algorithm is called \textit{off-line} if the input (graph) can be accessed before the output (colouring) is produced. In contrast, an \textit{on-line} algorithm is presented with the input sequentially and must make decisions based only on the subset of the input seen so far, assigning colours immediately and irrevocably. There are two natural models of on-line colouring: the input is given either vertex-by-vertex or edge-by-edge. In either case, the algorithm assigns a colour to the current vertex or edge based only on the already revealed part of a graph, and the chosen colour cannot be changed later. In the most general setting, i.e., under \textit{adversarial} arrivals, the adversary determines the graph and the order in which it is revealed based on previous choices made by the algorithm. Other settings that have received significant attention include \textit{random-order} arrivals, where the adversary must decide on the graph in advance and the order is chosen uniformly at random from all possible permutations.
	
	The study of on-line colourings originated with vertex colouring. Early research focused on developing algorithms that could outperform the greedy algorithm, which assigns each new vertex $v$ the lowest-labelled colour not already assigned to a vertex adjacent to $v$. See e.g.~\cite{Gyarfas,Lovasz_online,Vishw} for important early results. The proper edge-colouring version was introduced by Bar-Noy, Motwani and Naor \cite{online}. They showed that the greedy edge-colouring algorithm, which uses at most $2\Delta(G) -1$ colours, is best possible, i.e. no on-line algorithm can colour all graphs using $2\Delta(G) -2$ colours, in either arrival method and regardless of whether the setting is adversarial or randomized. However, this result only holds for bounded-degree graphs (of maximum degree $\Delta = O(\log n)$ for the adversarial case and $\Delta = O(\sqrt{\log n})$ for the randomized case). Bar-Noy \textit{et al.} conjectured that better algorithms exist for graphs of sufficiently high maximum degree.
	
	The on-line edge-colouring conjecture has attracted considerable attention over the years. Recently, Blikstad, Svensson, Vintan and Wajc \cite{bsvw} resolved the conjecture, showing that a $(1+o(1))\Delta$-edge-colouring exists using a randomized algorithm for all graphs with $\Delta = \omega(\log n)$. Furthermore, they improved this bound in \cite{bsvw2} to $\Delta = \omega(\sqrt{\log n})$ in the randomized case and also solved the more general case by providing a deterministic on-line $(1+o(1))\Delta$-edge-colouring algorithm for graphs with known maximum degree $\Delta = \omega(\log n)$ in the adversarial edge arrival setting.
	
	While the off-line version of majority edge-colouring has been studied (\cite{Bock,majority_gen,majority_list}), its on-line analogue remains open. To the best of our knowledge, on-line majority colourings (whether vertex or edge colourings) have not been studied previously. We note, however, that Bosek, Grytczuk and Jakóbczak~\cite{Bosek} considered a related game version of majority vertex colouring in which two players colour vertices alternately, one player attempting to produce and the other to prevent a majority vertex-colouring of a graph using a prescribed number of colours.
	
	We formulate the on-line majority edge-colouring problem as a game between two players. The Presenter constructs a graph by revealing its edges one at a time. After each edge is revealed, the Algorithm must assign a colour to that edge immediately and irrevocably. The Presenter may choose the next edge based on the entire history of the game, including all colours assigned by the Algorithm.
	
	In the next section we establish that the greedy algorithm is optimal among the on-line algorithms if no additional constraints are introduced. Section \ref{sect_degree} extends this result to the case where the minimum degree of a graph is known, and Section \ref{sect_size} analyses the minimum number of vertices relative to that degree. The final section contains concluding remarks and open problems.

	\section{General case}
	
	In this subsection, we consider the adversarial version of on-line majority edge colouring in the \textit{edge-by-edge} model, where the only restriction on the Presenter is that the final graph must admit a majority edge colouring, i.e. the final graph has minimum degree at least $2$ (we assume for simplicity, that the final graph has no vertices of degree $0$). We start with an edgeless graph. Thus, during the game, the graph revealed so far may contain vertices of degree $1$ and therefore may not admit a majority edge colouring. Nevertheless, the Algorithm must colour the graph so that, in every round, the majority condition is satisfied at every vertex of degree at least $2$. Indeed, suppose that in some round the Algorithm assigns colour $c$ to the newly revealed edge $e$, and that after this round one endpoint $v$ of $e$ has degree at least $2$ and more than half of the edges incident with $v$ have colour $c$. If the graph after this round already has minimum degree at least $2$, the Presenter can end the game. Otherwise, in subsequent rounds the Presenter reveals edges incident with vertices of degree $1$ that are not incident with $v$. Hence, the vertex $v$ will still violate the majority condition in the final graph, and the Algorithm loses.
	
	Let $G$ be the graph formed by the edges revealed so far. We say that a colour $c$ is \textit{blocked} for a vertex $v$ of $G$ if $d_G(v) > 0$ and assigning colour $c$ to a new edge incident with $v$ would result in more than half of the edges incident with $v$ sharing the same colour. If $d_G(v) = 0$, no colour is blocked for $v$.
	
	We begin by showing that the \textit{greedy algorithm} allows the Algorithm to use at most $5$ colours. We recall that the greedy algorithm uses colours from the set $\{1,2,3,\dotsc\}$ and, for each edge revealed by the Presenter, assigns it the smallest colour that is not blocked at either endpoint.
	
	\begin{theorem}
		The Algorithm has a strategy using at most five colours against any strategy of the Presenter,
		\begin{proof}
			Consider a game in which the Algorithm uses the greedy algorithm. Suppose that, before the Presenter reveals the next edge, the currently revealed graph $G$ is coloured with at most five colours and that the majority condition is satisfied at every vertex of degree at least $2$.
			
			Let $e=uv$ be the next edge revealed by the Presenter. If $d_G(v)\geq 2$ is odd, then the threshold for the majority condition will increase after adding the edge $e$ and thus no colour is blocked at $v$; if $d_G(v)=1$ then exactly one colour is blocked. If $d_G(v)$ is even, then a colour $c$ is blocked at $v$ if and only if exactly half of the edges incident with $v$ are currently coloured with $c$. Hence, at most $2$ colours can be blocked at $v$, and similarly, at most $2$ colours are blocked at the vertex $u$. Thus, at most $4$ colours are blocked for the edge $e$, leaving at least one remaining colour available for the edge $e$.
		\end{proof}
	\end{theorem}
	
	From now on, we consider games in which the Presenter has an additional restriction: before the game begins, the Algorithm knows the minimum degree $\delta$ of the final graph. It is enough to consider even values of $\delta$. Indeed, the maximum allowed number of incident edges receiving the same colour is the same for vertices of degrees $2l$ and $2l+1$, for every positive integer $l$. Hence, if the Presenter has a strategy forcing a given number of colours when $\delta=2l$, the same strategy can be extended to the case $\delta=2l+1$ by adding one edge incident with each vertex of degree $2l$ to the final graph.
	
	The smallest non-trivial minimum degree to consider is $\delta=2$, since we assume that no vertices of degree $0$ are present in the final graph (and those vertices are irrelevant for majority edge-colourings anyway, as they always satisfy the majority condition) and vertices of degree $1$ preclude the existence of any majority colouring, regardless of the number of colours.
	
	For subcubic graphs, majority edge-colouring is equivalent to proper edge-colouring, since vertices of degree $2$ and $3$ can have at most one incident edge in each colour. Thus, we can use the construction by Bar-Noy \textit{et al.} \cite{online} to prove the following observation.
	
	\begin{observation}\label{online2}
		The Presenter has a strategy forcing the Algorithm to use at least five colours if the final graph has minimum degree $\delta=2$.
		\begin{proof}
			Suppose that the Algorithm has a strategy using only four colours, regardless of the Presenter's strategy, provided that the final graph has minimum degree $2$.
			
			The Presenter begins by revealing the edges of 
			\[ 2 \binom{4}{2}+1=13 \]
			disjoint stars of degree $2$. 
			
			Because the centre of every star has degree $2$ and must satisfy the majority condition, the two edges incident with each centre receive distinct colours. Thus, each star determines an unordered pair of colours. Since there are only $\binom{4}{2}$ possible pairs, the Pigeonhole Principle implies that at least three stars receive the same pair of colours, $\{c_1,c_2\}$.
			
			Let $v_1,v_2,v_3$ be the centres of these three stars. Both $c_1$ and $c_2$ are blocked at each $v_i$. The Presenter adds a new vertex $v$ and reveals the three edges joining it to $v_1,v_2,v_3$. For the majority condition to hold at each $v_i$, each of the three new edges must receive a colour different from both $c_1$ and $c_2$. Moreover, since $v$ has degree $3$, these three edges must receive pairwise distinct colours. Since the Algorithm has only four colours available and only two of them are different from $c_1$ and $c_2$, the Algorithm cannot colour all three new edges. Thus, the Presenter forces the Algorithm to use a fifth colour, a contradiction.
		\end{proof}
	\end{observation}
	
	Note that this method does not extend easily to larger values of $\delta$. For example, if $\delta=4$, the Presenter can reveal many stars of degree $4$, but the Algorithm may colour the edges of each star with four distinct colours, which means no colour would be blocked at the centres. To obtain a contradiction, the Presenter would need to force the Algorithm to create at least five vertices of degree $4$ at which two colours are blocked, for example by ensuring that two incident edges receive colour $c_1$ and the other two receive colour $c_2$. In the next section we provide a construction achieving this goal.

	\section{Graphs with bounded minimum degree}\label{sect_degree}
	
	In this section, we extend Observation \ref{online2} to the case in which the Presenter must construct a final graph with minimum degree equal to a fixed integer $\delta \geq 2$. As established in the previous section, it suffices to consider even values of $\delta$. Thus, we write $\delta = 2l$ for some integer $l$.
	
	Similarly as before, vertices of degree smaller than $\delta$ may appear during the game. For such vertices, instead of requiring the majority condition to hold, we require that, for every vertex $v$ of degree smaller than $\delta=2l$, at most $l$ currently revealed incident edges have the same colour. If, during the game, the Algorithm colours an edge so that some vertex $v$ of degree smaller than $\delta$ has more than $l$ incident edges of the same colour, then the Presenter can reveal additional edges incident with $v$ until its degree becomes exactly $\delta$. The majority condition is then violated at $v$.
	
	If, on the other hand, at some point during the game the majority condition is not satisfied at a vertex $v$ of degree at least $\delta$, then the Presenter can reveal arbitrary edges incident with vertices of degree smaller than $\delta$ that are not incident with $v$. Hence, $v$ still violates the majority condition in the final graph, and the Algorithm loses.
	
	We now show that five colours are necessary for every fixed value of $\delta$.
	
	\begin{theorem}\label{main}
		Let $l\geq 1$ be an integer. Consider the on-line majority edge-colourings game of graphs with minimum degree $\delta=2l$. Then the Presenter has a strategy forcing the Algorithm to use at least five colours.
		\begin{proof}
			Suppose, for a contradiction, that the Algorithm has a strategy using only four colours, from the set $\{1,2,3,4\}$, regardless of the Presenter's strategy, provided that the final graph has minimum degree $\delta = 2l$.
			
			The Presenter's strategy is divided into several phases. The order in which edges are revealed within each phase is irrelevant. In the first phase, the Presenter reveals a star with
			\[ 2\cdot(2l+1)^{2l} \]
			leaves. Among the four colours, two colours occur most frequently on the edges of the star. Relabelling the colours if necessary, we may assume that these two colours are $1$ and $2$. Thus, at least $(2l+1)^{2l}$ of the leaves are incident with an edge coloured $1$ or $2$. Let $S_1$ be a set of exactly $(2l+1)^{2l}$ such leaves, chosen arbitrarily.
			
			In the second phase, the Presenter partitions the set $S_1$ into disjoint subsets of size $2l+1$. For each subset, the Presenter reveals a new vertex adjacent to all vertices in that subset. This creates 
			\[ (2l+1)^{2l-1} \]
			new vertices, each of degree $2l+1$. Hence, no colour can occur more than $l$ times in a majority edge-colouring. Since there are only two colours, $1$ and $2$, they can together occur on at most $2l$ incident edges. Hence at least one edge incident with each newly revealed vertex must receive a colour from $\{1,2\}$. From each $(2l+1)$-element subset of $S_1$, the Presenter selects one endpoint of such an edge and places it in a set $S_2$. Thus, after this step, the set $S_2$ contains $(2l+1)^{2l-1}$ vertices of degree two whose incident edges are coloured with $1$ or $2$.
			
			The Presenter now repeats this procedure. More precisely, suppose that, immediately before the phase $i$, the Presenter has a set $S_{i-1}$ containing 
			\[ (2l+1)^{2l-i+2} \]
			vertices, each of degree $i-1$, and every edge incident with a vertex in $S_{i-1}$ has colour $1$ or $2$. The Presenter partitions the set $S_{i-1}$ into disjoint subsets of size $(2l+1)$. For each such subset, the Presenter reveals a new vertex adjacent to all vertices in that subset. There are $(2l+1)^{2l-i+1}$ new vertices. Each has degree $(2l+1)$, so at least one of its incident edges must receive colour $1$ or $2$. The Presenter selects one endpoint of such an edge from each subset and places the selected vertices in the set $S_i$. Therefore, $S_i$ contains 
			\[ (2l+1)^{2l-i+1} \]
			vertices of degree $i$, all of which are incident only to edges coloured with $1$ or $2$.
			
			After the $2l$-th phase, the Presenter has constructed a set $S_{2l}$ of $2l+1$ vertices, each of degree $2l$, such that all edges incident with these vertices are coloured exclusively with colours $1$ and $2$. Since each such vertex already has degree $2l=\delta$ and the majority condition must hold at every vertex of degree at least $\delta$, both colours $1$ and $2$ occur exactly $l$ times among the edges incident with each vertex. In particular, both colours are blocked at every vertex in $S_{2l}$.
			
			Finally, the Presenter reveals a single new vertex $v$ adjacent to every vertex in $S_{2l}$. Since the vertex $v$ has degree $2l+1$, the edges incident with $v$ must receive at least three distinct colours in a majority edge-colouring. However, all its adjacent vertices have the colours $1$ and $2$ blocked, hence only colours $3$ and $4$ are available. This implies that a fifth colour is necessary, a contradiction.
		\end{proof}
	\end{theorem}

	\section{Relation between minimum degree and the size of the graph}\label{sect_size}
	
	In the previous section, we considered the on-line majority edge-colourings game with the minimum degree as the only parameter. We assumed that the number of available vertices was sufficiently large for the Presenter to construct a graph forcing the use of a fifth colour. Usually, in on-line colouring problems, the number of vertices of the final graph is also known in advance. In this section, we investigate the relation between the minimum degree and the number of vertices that allows the Presenter to force the Algorithm to use five colours. We begin with the following observation determining the number of vertices used in the construction from Theorem \ref{main}.
	
	\begin{observation}
		The graph constructed in Theorem \ref{main} requires 
		\[ \Theta(\delta^{\delta}) \]
		vertices.
		\begin{proof}
			Let $\delta = 2l$. The first phase uses $1+2\cdot(2l+1)^{2l}$ vertices: one centre of the star and $2\cdot(2l+1)^{2l}$ leaves. The construction has $2l+1$ phases in total, with the last phase adding a single vertex. Moreover, for each $i=2,\ldots,2l$, phase $i$ adds $2l+1$ times as many vertices as phase $i+1$. Thus, the total number of vertices is
			\begin{align*}
				 n&= 1+2\cdot(2l+1)^{2l} + \sum_{i=0}^{2l-1} (2l+1)^i\\
				&= 1+2\cdot(\delta+1)^{\delta} + \sum_{i=0}^{\delta-1} (\delta+1)^i\\
				&= 1+2\cdot(\delta+1)^{\delta} + \frac{(\delta+1)^{\delta}-1}{\delta}.
			\end{align*}
			As $\delta\to \infty$, the term $2(\delta+1)^{\delta}$ dominates the other two terms. Hence,
			\[n \sim 2(\delta+1)^{\delta}. \]
			
			Furthermore, since
			\[ 2(\delta+1)^{\delta} = 2\delta^{\delta}\left(1+\frac{1}{\delta}\right)^{\delta} \]
			and
			\[ \lim_{\delta\rightarrow \infty}\left(1+\frac{1}{\delta}\right)^{\delta}=e, \]
			we obtain $n \sim 2e\delta^{\delta}$.
			
			Thus, $n=\Theta (\delta^{\delta})$.
		\end{proof}
	\end{observation}
	
	It follows that if $\delta \geq 2$ and $n = \Omega(\delta^{\delta})$, the greedy algorithm is an optimal strategy for the Algorithm. Expressing $\delta$ in terms of $n$, we obtain the following.
	
	\begin{corollary}
		If $\delta$ is sufficiently small, namely 
		\[ \delta = O \left(\frac{\log n}{\log\log n} \right), \]
		then there is no strategy for the Algorithm using at most four colours against every strategy of the Presenter for $n$-vertex graphs with minimum degree $\delta$.
		\begin{proof}
			The construction in Theorem \ref{main} uses $n=\Theta(\delta^{\delta})$ vertices.
			Taking the natural logarithm on both sides we get
			\[ \log n = \delta \log \delta + O(1). \]
			The solution of $x\log x=\log y$ is $x=\frac{\log y}{W_0(\log y)}$, where $W_0$ denotes the principal branch of the Lambert $W$ function.
			Applying this identity gives
			\[ \delta \sim \frac{\log n}{W_0(\log n)}. \]
			Note that for $x\to \infty$ we have $W_0(x)=\log x - \log\log x + o(1)$. Thus for $n \to \infty$ we obtain
			\[ \delta \sim \frac{\log n}{\log\log n - \log\log\log n} \sim \frac{\log n}{\log\log n}.\]
			Since $n=\Omega(\delta^{\delta})$ gives a lower bound on $n$, we obtain an upper bound on $\delta$. Therefore, for
			\[ \delta = O\left(\frac{\log n}{\log\log n}\right) \]
			the Presenter has a strategy forcing the Algorithm to use at least five colours.
		\end{proof}
	\end{corollary}

	\section{Generalisation}
	Intuitively, results on off-line majority edge-colourings suggest that the number of colours required in the on-line setting should also depend on the minimum degree. However, we have shown that for arbitrarily large minimum degree, the Presenter can force the use of five colours. However, such a dependence on the minimum degree already arises for $k=3$ in the case of \textit{$\frac{1}{k}$-majority on-line edge-colouring}. This notion is a natural extension of the majority colourings considered previously in various settings, e.g. in \cite{MajorityGeneralOur,Bock,digraph3,Knox-Samal,digraph1}.
	
	For a fixed integer $k\geq2$, consider the game in which the Presenter reveals edges of a graph one by one so that the final graph has minimum degree $\delta\geq k$, where $\delta$ is fixed and known before the start of the game by the Algorithm. The goal of the Algorithm is to colour the edges with as few colours as possible so that the resulting colouring is a $\frac{1}{k}$-majority edge-colouring of the final graph. That is, for every vertex $v$, at most $d(v)/k$ incident edges may receive the same colour.
	
	As before, we must assume that the final graph has minimum degree at least $k$. Otherwise, if the final graph contained a vertex of positive degree smaller than $k$, no $\frac{1}{k}$-majority edge-colouring could exist, regardless of the number of colours (as before, vertices of degree $0$ are ignored).
	
	We begin by determining the number of colours sufficient for the Algorithm when using the greedy strategy. We say that a colour $c$ is \textit{blocked} for a vertex $v$ if adding an edge incident with $v$ and coloured $c$ would violate the $\frac{1}{k}$-majority condition at $v$.
	
	\begin{theorem}
		Let $k\geq 2$ be an integer. Consider the game of $\frac{1}{k}$-majority on-line edge-colouring of a graph in which the Presenter must construct a final graph with minimum degree $\delta=k\ell+j$, where $j\in \{0,\dotsc,k-1 \}$. Then the Algorithm has a strategy using at most
		\[ 2\left( k+\left\lfloor \frac{k-2}{\ell} \right\rfloor \right)+1 \]
		colours against any strategy of the Presenter.
		\begin{proof}
			Let $t = k+\left\lfloor\frac{k-2}{\ell}\right\rfloor$. Consider a game in which the Algorithm uses the greedy algorithm. Suppose that, before the Presenter reveals the next edge, the currently revealed graph $G$ is coloured with at most $2t+1$ colours and that the $\frac{1}{k}$-majority condition is satisfied at every vertex of degree at least $\delta$.
			
			Let $e=uv$ be the next edge revealed by the Presenter, and write $d_G(v)=kp+i$, where $p\geq \ell$ and $i\in \{0,\dotsc,k-1 \}$. If $d_G(v) \equiv k-1 \pmod{k}$, then the maximum allowed number of incident edges sharing a colour would increase after adding $e$, and hence no colour is blocked for $v$. Otherwise, a colour $c$ is blocked for $v$ if and only if $p$ incident edges at $v$ have already received colour $c$. Thus, at most
			\[ \left\lfloor\frac{d_G(v)}{p}\right\rfloor = k+\left\lfloor\frac{i}{p}\right\rfloor \leq k+\left\lfloor\frac{k-2}{p}\right\rfloor \leq k+\left\lfloor\frac{k-2}{\ell}\right\rfloor =t \]
			colours are blocked for $v$. If $d_G(v)<k\ell$, then at most $\lfloor \frac{d_G(v)}{\ell} \rfloor \leq k-1$ colours are blocked. The same argument applies to $u$, so at most $t$ colours are blocked there as well.
			
			Therefore, at most $2t$ colours are blocked at the endpoints of $e$. Among the $2t+1$ available colours, at least one is available at both endpoints. The Algorithm can assign such a colour to $e$. Thus, the $\frac{1}{k}$-majority condition is preserved after every move.
		\end{proof}
	\end{theorem}
	
	Thus, the larger $\delta$ is, the fewer colours the Algorithm needs when using this strategy. In particular, for $\delta=k=k\cdot1$ we have
	\[ 2\left(k+\left\lfloor\frac{k-2}{1}\right\rfloor\right)+1 = 2(2k-2)+1 = 4k-3, \]
	whereas for $\delta=k\ell\geq k(k-1)$ we obtain
	\[ 2\left(k+\left\lfloor\frac{k-2}{\ell}\right\rfloor\right)+1 = 2k+1. \]
	
	Note that for $k=2$,
	\[ 4k-3=2k+1=5.\]
	
	In the remainder of this section, we briefly discuss a construction, analogous to the one in Theorem \ref{main}, that forces exactly $2\left(k+\lfloor\frac{k-2}{\ell}\rfloor\right)+1$ colours. We omit the details, as the argument is analogous to that used in the proof of Theorem \ref{main}.
	
	If a vertex $v$ has degree $k\ell+i$, where $\ell$ is an integer and $i\in \{0,\dotsc,k-1 \}$, then a vertex of degree $k\ell+i$ permits the same number of incident edges of any one colour as a vertex of degree $k\ell$. Hence, it suffices to consider the case in which the minimum degree of the final graph is divisible by $k$. Indeed, if the Presenter has a strategy forcing a given number of colours when $\delta=k\ell$, then the same strategy can be extended to the case $\delta=k\ell+i$, where $0<i<k$, by adding edges to the final graph obtained for $\delta=k\ell$ until we increase its minimum degree to $k\ell+i$, analogously to the argument given above for $k=2$.
	
	Moreover, as before, for a fixed minimum degree $\delta=k\ell$, if in some round the Algorithm coloured an edge so that one of its endpoints had degree at least $\delta$ and the $\frac{1}{k}$-majority condition was violated there, or if a vertex had degree smaller than $\delta$ but more than $\ell$ incident edges of the same colour, then the Presenter would have a winning strategy: they could add edges in such a way that the colouring produced by the Algorithm would fail to be a $\frac{1}{k}$-majority edge-colouring of the final graph, regardless of the Algorithm's subsequent choices.
	
	\begin{theorem}
		Let $k\geq 2$ be an integer. Let $\delta=k\ell$ and $t=k+\left\lfloor\frac{k-2}{\ell}\right\rfloor$. Consider the game of $\frac{1}{k}$-majority on-line edge-colouring of a graph with minimum degree $\delta$. Then the Presenter has a strategy forcing the Algorithm to use at least $2t+1$ colours.
		\begin{proof}[Proof sketch]
			For a vertex of degree $t\ell+1$, exactly $\ell$ incident edges may have the same colour, and hence
			\[ \left\lceil\frac{t\ell+1}{\ell}\right\rceil=t+1 \]
			colours are necessary in a $\frac{1}{k}$-majority edge-colouring.
			
			Suppose that the Algorithm has a strategy using only $2t$ colours, from the set $\{1,2,\dotsc,2t\}$, regardless of the Presenter's strategy, provided that the final graph has minimum degree $\delta$.
			
			In the first phase, the Presenter presents a star with
			\[ 2\cdot(t\ell+1)^{t\ell} \]
			leaves. Without loss of generality, assume that at least $(t\ell+1)^{t\ell}$ edges of this star have been coloured with colours from $\{1,2,\dotsc,t\}$. The Presenter selects exactly $(t\ell+1)^{t\ell}$ leaves incident with such edges and places them in the set $S_1$. In each of the next $t\ell$ phases, the Presenter partitions $S_i$ into disjoint subsets of size $t\ell+1$ and, for each such subset, reveals a new vertex adjacent to every vertex of that subset. This new vertex has degree $t\ell+1$, and therefore $t+1$ colours must be used on its incident edges. Hence, for each subset, one of these edges must receive a colour from $\{1,2,\dotsc,t\}$. The Presenter then constructs, as in the proof of Theorem \ref{main}, a set $S_{i+1}$ whose vertices are incident only with edges coloured from $\{1,2,\dotsc,t\}$. In the final phase, $S_{t\ell}$ is a set of $t\ell+1$ vertices of degree $t\ell$, which the Presenter joins to a new vertex. Since every vertex in $S_{t\ell}$ has all colours from $\{1,2,\dotsc,t\}$ blocked, while the new vertex has degree $t\ell+1$ and therefore requires $t+1$ colours, colour $2t+1$ must be used, a contradiction.
		\end{proof}
	\end{theorem}

	\section{Conclusions}
	
	Usually, in the context of on-line colouring, the primary metric of interest is the \textit{competitive ratio} of an algorithm, defined as the ratio between the number of colours used by a given algorithm in the on-line setting to the number of colours in an optimal off-line colouring. We consider this ratio in the worst case, i.e. the largest such number when playing against the Presenter's optimal strategy. For majority edge-colourings, however, the performance ratio provides limited insight.
	
	If the parameter $\delta$ is odd, then the competitive ratio for any algorithm using at most five colours is $5/3$. On the one hand, every graph with odd minimum degree requires at least three colours in an off-line majority edge-colouring, since it contains a vertex of odd degree. On the other hand, the constructions in Theorem \ref{main} and Observation \ref{online2} force the Algorithm to use five colours, and the corresponding final graphs can be easily chosen to admit a majority 3-edge-colouring.
	 
	If $\delta$ is even, then the competitive ratio is $5/2$. On the one hand, every graph of minimum degree $\delta\geq 2$ requires at least two colours in a majority edge-colouring. On the other hand, consider the construction in Theorem~\ref{main}, which forces the Algorithm to use five colours. The Presenter follows this construction, and the Algorithm must use five colours since otherwise it loses. We then extend the construction so that all vertices of the resulting graph have even degree by taking two copies of the graph and joining each odd-degree vertex to its counterpart in the other copy. If the resulting graph has an odd number of edges, we can extend it further while preserving the parity of all degrees and the minimum degree $\delta$, by attaching a complete graph $K_{4\ell+3}$ to a single vertex, where $\ell$ is chosen sufficiently large so that the minimum degree remains unchanged. The degrees of all vertices of the resulting graph remain even. Moreover, $K_{4\ell+3}$ has an odd number of edges, and hence the resulting graph has an even number of edges. Therefore, the constructed graph admits a majority 2-edge-colouring by a simple construction which is a consequence of Euler's Theorem (see e.g. Theorem~1 in \cite{Bock}).
	
	Consequently, the primary focus in this setting is to determine the sharp threshold for $\delta$ (as a function of $n$) at which no strategy using only four colours exists for the Algorithm.
	
	\begin{problem}
		Let $\delta\geq 2$. What is the largest $n$ for which the Algorithm has a strategy using at most four colours regardless of the Presenter's strategy, if the final graph is required to have $n$ vertices and minimum degree $\delta$?
	\end{problem}
	
	The construction in Theorem \ref{main} uses $\Theta(\delta^\delta)$ vertices. Determining whether this threshold is optimal in general appears difficult. However, some improvements are possible in the simplest case $\delta=2$. Below, we present some constructions.
	
	\begin{observation}
		For $\delta =2$ and $n\geq 8$, the Presenter has a strategy forcing the Algorithm to use at least five colours.
		\begin{proof}
			Let $n\geq 8$. Suppose that the Algorithm has a strategy allowing it to colour every $n$-vertex graph of minimum degree $2$ using at most four colours.
			
			The Presenter first reveals the edges of a cycle on seven vertices. Since a majority edge-colouring of a subcubic graph is also a proper edge colouring, two distinct colours must be used on the edges incident with every vertex of the cycle. 
			
			Suppose that there are vertices $u$ and $v$ of the cycle $C_7$ whose incident edges use disjoint sets of colours; that is, the edges incident with $u$ receive colours $c_1$ and $c_2$, while those incident with $v$ receive colours $c_3$ and $c_4$, with $c_3,c_4$ distinct from $c_1,c_2$. The Presenter then reveals the edge $uv$. Note that $u$ and $v$ cannot be adjacent on $C_7$, since their sets of colours are disjoint. Colours $c_1$ and $c_2$ are blocked at $u$, while $c_3$ and $c_4$ are blocked at $v$. Since each of the four colours used so far would violate the majority condition at either $u$ or $v$, a fifth colour must be used.
			
			Alternatively, suppose that no pair of vertices of $C_7$ has disjoint colour sets. This implies that for any two vertices, the sets of colours assigned to edges incident with them have at least one common colour. Among the six possible pairs of colours from a set of four colours, at most three such pairs can pairwise intersect. By the Pigeonhole Principle, since there are seven vertices, at least three of them share the same colour set, say $\{c_1,c_2\}$. Hence, colours $c_1$ and $c_2$ are blocked at each of these three vertices. The Presenter now reveals a new, eighth vertex adjacent to each of these three vertices. The three new edges can therefore use only colours $c_3$ and $c_4$. However, to satisfy the majority condition at the new vertex of degree $3$, three distinct colours are required. Hence, a fifth colour must be used, a contradiction.
		\end{proof}
	\end{observation}
	
	Note that for $n\leq 4$, three colours suffice for on-line majority edge-colouring: for $n\leq 3$ it is trivial; for $n=4$, it suffices for the Algorithm to assign the same colour to pairs of non-adjacent edges. However, already for $n=5$ four colours are necessary, since the graph obtained from $K_4$ by subdividing one edge does not admit an off-line majority edge-colouring with only three colours. This leads to the following open problem.
	
	\begin{problem}
		For $n\in\{5,6,7\}$ does the Algorithm have a strategy using at most four colours regardless of the Presenter's strategy, if the final graph is required to have $n$ vertices and minimum degree $\delta =2$? 
	\end{problem}
	
	Note that the greedy algorithm already requires five colours for $n=5$. In particular, the Presenter first reveals the cycle $C_4$, which the greedy algorithm colours using only colours $1$ and $2$. The Presenter then reveals a fifth vertex adjacent to three vertices of the cycle. For each of these three vertices, colours $1$ and $2$ are blocked, while the new vertex requires three distinct colours on its incident edges. Hence, a fifth colour must be used.

\end{document}